\documentclass[12pt]{article}
\usepackage{epsfig}
\usepackage{amsmath}
\usepackage{amssymb}

\parskip 2mm
\parindent 0mm

\addtolength{\textwidth}{0.2in} \addtolength{\oddsidemargin}{-0.1in} \addtolength{\evensidemargin}{-0.1in}
\addtolength{\topmargin}{-0.4in} \addtolength{\textheight}{0.3in}

\newcommand{\R}{\ensuremath{\mathbb R}}

\newcommand{\Z}{\ensuremath{\mathbb Z}}
\newtheorem{lemma}[equation]{Lemma}
\newtheorem{theorem}[equation]{Theorem}

\newcommand{\qed}{\hspace{\stretch{1}}$\Box$}
\newenvironment{proof}{\vspace{-.25\baselineskip}\noindent\textbf{Proof.}
}{\qed\par\medskip}

\title{Many collinear $k$-tuples with no $k+1$ collinear points}
\author{{J\'ozsef Solymosi}\thanks{Department of Mathematics, University of British Columbia,
Vancouver, Canada, email: solymosi@math.ubc.ca. Supported by NSERC, ERC-AdG.\ 321104, and OTKA NK 104183 grants.} \and {Milo\v{s}
Stojakovi\'{c}}
\thanks{Department of
Mathematics and Informatics, University of Novi Sad, Serbia, email:
milos.stojakovic@dmi.uns.ac.rs. Partly supported by Ministry of Science and Technological
Development, Republic of Serbia, and Provincial Secretariat for Science, Province of
Vojvodina.}}
\begin{document}

\maketitle

\begin{abstract}
For every $k>3$, we give a construction of planar point sets with many collinear $k$-tuples
and no collinear $(k+1)$-tuples. We show that there are $n_0=n_0(k)$ and
$c=c(k)$ such that if $n\geq n_0$, then there exists a set of $n$ points in the plane that does not contain $k+1$ points on a line, but it contains at least $n^{2-\frac{c}{\sqrt{\log n}}}$ collinear $k$-tuples of points. Thus, we significantly improve the previously best known lower bound for the largest number of collinear $k$-tuples in such a set, and get reasonably close to the trivial upper bound $O(n^2)$.
\end{abstract}

\section{Introduction}
In the early 60's Paul Erd\H os asked the following question about point-line incidences in
the plane: \emph{Is it possible that a planar point set contains many collinear four-tuples,
but it contains no five points on a line?} There are constructions for $n$-element
point sets with $n^2/6-O(n)$ collinear triples with no four on a line (see \cite{BGS} or
\cite{FP}). However, no similar construction is known for larger tuples.

Let us formulate Erd\H os' problem more precisely. For a finite set $P$ of points in the plane
and $k\geq 2$, let $t_k(P)$ be the number of lines meeting $P$ in exactly $k$ points, and let
$T_k(P):=\sum_{k'\geq k}t_{k'}(P)$ be the number of lines meeting $P$ in at least $k$ points.
For $r>k$ and $n$, we define
$$
t_k^{(r)}(n):= \max_{|P|=n \atop T_r(P)=0} t_k(P).
$$

In plain words, $t_k^{(r)}(n)$ is the number of lines containing exactly $k$ points from $P$,
maximized over all $n$ point sets $P$ that do not contain $r$ collinear points. Erd\H os conjectured
that $t_k^{(r)}(n)=o(n^2)$ for any fixed $r>k>3$ and offered \$100 for a proof or disproof \cite{EP4}
(the conjecture is listed as Conjecture 12 in the problem collection of Brass, Moser, and Pach
\cite{BMP}). In this paper we are concerned about bounding $t_k^{(k+1)}(n)$ from below for $k>3$. To simplify notation, from now on we will use $t_k(n)$ to denote $t_k^{(k+1)}(n)$.

\begin{figure}[htb]
  \centering %
  \epsfig{file=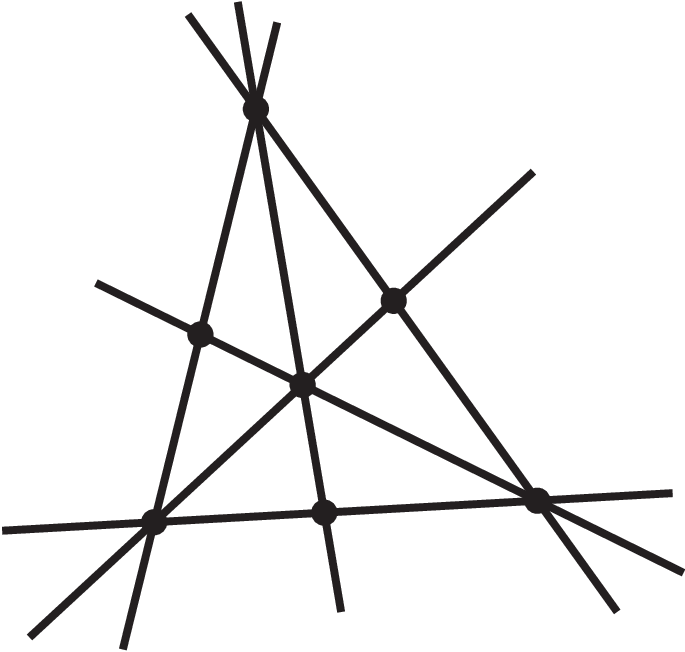, scale=0.5}
  \caption{A construction of a point set showing that $t_3(7)\geq 6$. \label{f:0}}
  \bigskip
\end{figure}

\subsection{Earlier results and our result}
This problem was among Erd\H os' favourite geometric problems, he frequently talked about it
and listed it among the open problems in geometry, see \cite{EP4,E1,E2,E3,EP}. It is not just a
simple puzzle which might be hard to solve, it is related to some deep and difficult problems
in other fields. It seems that the key to attack this question would be to understand the
group structure behind point sets with many collinear triples. A recent result of Green and Tao
-- proving the Motzkin-Dirac conjecture \cite{GT} -- might be  an important development in this direction.

In the present paper, our goal is to give a construction showing that Erd\H os
conjecture, if true, is sharp; for $k>3$, one can not replace the exponent 2 by $2-c$, for
any $c>0$.

The first result was due to K\'arteszi \cite{Ka} who proved that $t_k(n)\geq
c_kn\log{n}$ for all $k>3$. In 1976 Gr\"unbaum \cite{Gr} showed that $t_k(n)\geq
c_kn^{1+1/(k-2)}$. For some 30 years this was the best bound when Ismailescu \cite{Is}, Brass
\cite{Brass}, and Elkies \cite{El} consecutively improved Gr\"unbaum's bound for $k\geq 5$.
However, similarly to Gr\"unbaum's bound, the exponent was going to 1 as $k$ went to infinity.

In what follows we are going to give a construction that substantially improves the lower bound. Namely, we will show the following.

\begin{theorem} \label{t:even}
For any $k\geq 4$ integer, there is a positive integer $n_0$ such that for $n>n_0$ we have
$t_k(n)>n^{2-\frac{c}{\sqrt{\log n}}}$, where $c=2\log(4k+9)$.
\end{theorem}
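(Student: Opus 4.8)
The plan is to lift the problem to high dimension and project back. Concretely, I would first show that it suffices to build, for a suitable dimension $d$ and radius, a set $Q\subseteq\Z^d$ of $n$ points with no $k+1$ of them collinear and with at least $n^{2-c/\sqrt{\log n}}$ lines meeting $Q$ in exactly $k$ points; applying a generic orthogonal projection $\pi\colon\R^d\to\R^2$ then yields a planar set with the same incidence pattern. Indeed, $\pi$ sends collinear points to collinear points, and since $Q$ is finite, for all but a measure-zero set of projections $\pi$ is injective on $Q$ and creates no new collinear triple (each of the finitely many non-collinear triples of $Q$ stays non-collinear away from a proper subvariety of projections). Thus a generic $\pi$ preserves the exact collinearity structure of $Q$, and its image witnesses the claimed bound for $t_k^{(k+1)}(n)$.

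For the high-dimensional set I would use lattice points on a short family of concentric spheres. Writing $m=\lfloor k/2\rfloor$, I choose integers $h_0$ and $\lambda$, set $\rho_i^2=h_0+\lambda(2i-1)^2$ for $i=1,\dots,m$, and let $Q$ consist of the lattice points on the spheres $S_d(\rho_i)$ (for odd $k$ one adjoins the central sphere $S_d(\sqrt{h_0})$, which is exactly where the odd case needs extra attention, since an inner secant could push the collinearity count to $k+1$). The algebraic point driving the construction is that a symmetric progression with centre $\mathbf c$ satisfying $|\mathbf c|^2=h_0$ and step $\mathbf v$ satisfying $\langle\mathbf c,\mathbf v\rangle=0$, $|\mathbf v|^2=4\lambda$, has its $i$-th symmetric pair of points at squared distance $h_0+\tfrac{(2i-1)^2}{4}|\mathbf v|^2=\rho_i^2$ from the origin, so the whole $k$-tuple automatically lands on the chosen spheres. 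Conversely, a line meets each $S_d(\rho_i)$ in at most two points, hence meets $Q$ in at most $2m\le k$ points; so $Q$ has no $k+1$ collinear points, and every progression of the above form lies on a line carrying exactly $k$ points of $Q$. Distinct progressions yield distinct such lines, so their number lower-bounds the quantity we want.

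Next I would count the admissible progressions. Constraining $\mathbf c$ and $\mathbf v$ only by $|\mathbf c|^2=h_0$, $|\mathbf v|^2=4\lambda$, $\mathbf v\perp\mathbf c$, and a fixed parity condition guaranteeing that all intermediate points are integral (this is where the two cases diverge: the centre is a lattice point for odd $k$ but a half-lattice point for even $k$), the number of progressions is of the order of $\tfrac12\,N\!\big(S_d(\sqrt{h_0})\big)\cdot N\!\big(S_{d-1}(2\sqrt\lambda)\big)$ — a count of centres times a count of orthogonal steps. Using the volume estimate $N(B_d(r))\approx\mathrm{vol}(B_d(r))$ for large $r$, and taking differences of consecutive balls to count points on spheres, I would show this product is at least $n^2/K^{d}$ for a constant $K=K(k)$, the factor $K^d$ arising from the gap between a single sphere's lattice-point count and $n$ and from the ambient-dimension factors relating $S_{d-1}$ to $S_d$.

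Finally I would optimize the free parameters. Since $N(S_d(\sqrt{h_0}))$ scales like $h_0^{(d-1)/2}$, the loss behaves like $\sqrt{h_0}\approx n^{1/(d-1)}$, so choosing $d\approx\sqrt{\log n}$ (and then taking $h_0,\lambda$, equivalently the radii, as large as needed to realize the target $n$) converts $n^2/K^d$ into $n^{2-c/\sqrt{\log n}}$; carefully tracking the constants in the two sphere counts is what pins the value down to $c=2\log(4k+9)$. The main obstacle is exactly this counting step: one must select $h_0$ and $\lambda$ so that $S_d(\sqrt{h_0})$ and $S_{d-1}(2\sqrt\lambda)$ genuinely carry close to their average number of lattice points — representation numbers of an individual integer fluctuate wildly — and control these counts uniformly while $d$ grows like $\sqrt{\log n}$. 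By contrast, the no-$(k+1)$ bound is immediate from intersecting a line with a sphere, and the projection step is routine.
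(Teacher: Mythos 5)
Your overall skeleton (concentric spheres determined by an arithmetic-progression identity, the line-meets-a-sphere-twice bound, and a generic projection to the plane) is the same as the paper's, but the heart of the argument --- producing \emph{many} progressions --- has a genuine gap, and you in fact point at it yourself. You propose to count progressions as (number of centres on $S_d(\sqrt{h_0})$) times (number of orthogonal steps, taken to be $N(S_{d-1}(2\sqrt{\lambda}))$), for integers $h_0,\lambda$ chosen in advance. Two things go wrong. First, the steps orthogonal to a fixed lattice point $\mathbf{c}$ lie in the sublattice $\Z^d\cap \mathbf{c}^{\perp}$, which is \emph{not} a copy of $\Z^{d-1}$ (its covolume grows like $|\mathbf{c}|$), so $N(S_{d-1}(2\sqrt{\lambda}))$ does not count them; for a fixed $\lambda$ there may be no admissible steps at all. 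Second, even ignoring that, one must exhibit $h_0,\lambda$ whose spheres carry close to the average number of lattice points, and as you admit, representation numbers of individual integers fluctuate wildly --- this is precisely the statement your outline does not prove. The paper's key idea, which is missing from your proposal, is a double pigeonhole that makes any such choice unnecessary: among the radii $r\le r_0=2^d$ there is a sphere $S_d(r)$ containing at least a $1/r_0^2$ fraction of the lattice points of $B_d(r_0)$; then, among the at most $4r^2$ possible squared distances between lattice points on that one sphere, some distance $\ell$ is realized by many pairs. The progressions are parametrized directly by these pairs (the centre and step are the midpoint and difference of the pair), so no product decomposition and no lower bound on any individual representation number is ever needed; the radii $r_i=\sqrt{r^2+i(i-1)\ell^2}$ are read off afterwards.

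There are two further gaps. To bound $n=|P|$ from above you suggest ``taking differences of consecutive balls to count points on spheres,'' but this cannot work: the error in the Gauss volume approximation (Lemma~\ref{l:1}) is far larger than the volume of a unit-thickness shell, so volume differences say nothing about $N(S_d(r))$. The paper instead proves $N(S_d(r))\le 2^{O(\log r/\log\log r)}\,N(B_{d-2}(r))$ (Lemma~\ref{l:2}) using the sum-of-two-squares/divisor-function bound, and this number-theoretic input is essential for showing $n\le 2^{d^2-\frac{d}{2}\log d+O(d)}$; without it the exponent computation collapses. Finally, for odd $k$ your set consists of $(k-1)/2$ outer spheres \emph{plus} the central sphere, i.e.\ $(k+1)/2$ spheres in all, so a line can meet $Q$ in $k+1$ points; you flag this danger but offer no repair. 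The paper repairs it by pigeonholing the midpoints into a hyperplane $\alpha_{x_0}$ and then including central-sphere points only inside $\alpha_{x_0}$ and outermost-sphere points only outside it: lines not in $\alpha_{x_0}$ pick up at most one point of the central sphere, lines inside $\alpha_{x_0}$ lose the outermost sphere entirely, and the counted progressions (whose endpoints have distinct first coordinates) still carry exactly $k$ points each, at the cost of one more pigeonhole factor $2r_0$.
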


We note that each of the collinear $k$-tuples
that we count in our construction has an additional property that
the points form a $k$-term arithmetic progression, as the distance
between every two consecutive points is the same in every coordinate.

\subsection{Preliminaries}
For $r>0$ and a positive integer $d$  let $B_d(r)$ denote the closed ball in
$\R^d$ of radius $r$ centred at the origin, and $S_d(r)$ denotes the sphere in $\R^d$ of
radius $r$ centred at the origin.

For any positive integer $d$ the $d$-dimensional unit hypercube (and its translates) are denoted by $H_d$. If the
center of the cube is a point ${\bf x}\in \R^d$ then it has the vertex set ${\bf x}+\{-1/2,1/2\}^d$ and we denote it by
$H_d({\bf x})$.

For a set $S\subseteq \R^d$, let $N(S)$ denote the number of points
from the integer lattice $\Z^d$ that belong to $S$, i.e.,
$N(S):=|\Z^d \cap S|$.

Through the paper the $\log$ notation stands for the base 2 logarithm.


\section{A lower bound on $t_k(n)$}
We will prove bounds for even and odd value of $k$
separately, as the odd case needs a bit more attention. Our proof is
elementary, we use the fact that the volume of a large sphere
approximates well the number of lattice points inside the ball.
There are more advanced techniques to count lattice points on the
surface of a sphere, however we see no way to improve our bound
significantly by applying them.

In our construction, we rely on the fact that a point set in a large dimensional space that satisfies our collinearity conditions can be converted to a planar point set by simply projecting it to a plane along a suitably chosen vector, with all the collinearities preserved. That enables us to perform most of the construction in a space of large dimension, exploiting the properties of such a space.

\subsection{Proof for even $k$}

Let $d$ be a positive integer, and let $r>0$. We will use a quantitative version of the following well known fact
\begin{eqnarray*}
{{N(B_d(r))}\over {V(B_d(r))}}\rightarrow 1 \; \; \; \text{as} \;\: r\rightarrow \infty,
\end{eqnarray*}
where
\begin{eqnarray} \label{vball}
V(B_d(r))=\frac{r^d\pi^{\frac{d}{2}}}{\Gamma\left(\frac{d}{2}+1\right)}, \
\end{eqnarray}
estimating the number of lattice points on a sphere using Gauss' volume argument.

\begin{lemma} \label{l:1}
For $r\geq \sqrt{d}$, we have
$$
V(B_d(r-\sqrt{d}/2))\leq N(B_d(r))\leq V(B_d(r+\sqrt{d}/2)).
$$
\end{lemma}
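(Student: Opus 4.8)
The plan is to run the classical Gauss volume argument, assigning to each lattice point the unit hypercube it sits at the center of. Concretely, for each $p \in \Z^d$ consider the translated unit cube $H_d(p)$ centered at $p$; these cubes tile $\R^d$ with pairwise disjoint interiors, and each has volume exactly $1$. Consequently, for any finite set of lattice points the volume of the union of the associated cubes equals the number of those points. Writing $U := \bigcup_{p \in \Z^d \cap B_d(r)} H_d(p)$, this gives the key identity $V(U) = N(B_d(r))$, and the whole lemma reduces to sandwiching $U$ between two balls.

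The one geometric fact I would isolate at the outset is that every point of a unit cube lies within distance $\sqrt{d}/2$ of its center (the half-diagonal): a point of $H_d(p)$ differs from $p$ by a vector in $[-1/2,1/2]^d$, whose Euclidean norm is at most $\sqrt{d\,(1/2)^2} = \sqrt{d}/2$. This single estimate drives both inequalities.

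For the upper bound I would show $U \subseteq B_d(r+\sqrt{d}/2)$: if $\|p\| \le r$, then every point of $H_d(p)$ is within $\sqrt{d}/2$ of $p$, hence within $r+\sqrt{d}/2$ of the origin. Taking volumes yields $N(B_d(r)) = V(U) \le V(B_d(r+\sqrt{d}/2))$. For the lower bound I would reverse the inclusion: any $x \in B_d(r-\sqrt{d}/2)$ lies in some tile $H_d(p)$, and then $\|p\| \le \|x\| + \sqrt{d}/2 \le r$, so $p \in \Z^d \cap B_d(r)$ and $x \in U$. Thus $B_d(r-\sqrt{d}/2) \subseteq U$, and taking volumes gives $V(B_d(r-\sqrt{d}/2)) \le V(U) = N(B_d(r))$. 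The hypothesis $r \ge \sqrt{d}$ is used only to keep the inner radius $r-\sqrt{d}/2$ nonnegative, so that $B_d(r-\sqrt{d}/2)$ is a genuine ball.

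There is no substantial obstacle here; the argument is elementary and self-contained. The only point deserving a line of care is making the tiling precise — that the half-open cubes $p + [-1/2,1/2)^d$ partition $\R^d$ — so that ``volume of the union equals the count'' is rigorous despite the shared boundaries of the closed cubes. Since those boundaries have measure zero, passing to the closed cubes used above changes nothing.
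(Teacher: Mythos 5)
Your proof is correct and follows exactly the paper's approach: the Gauss volume argument with unit cubes centered at lattice points, whose union has volume $N(B_d(r))$ and is sandwiched between $B_d(r-\sqrt{d}/2)$ and $B_d(r+\sqrt{d}/2)$ via the half-diagonal bound. You simply spell out the inclusions (and the measure-zero boundary issue) that the paper leaves as immediate.
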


\begin{proof}
For every lattice point $p\in B_d(r) \cap \Z^d$ we look at the unit cube $H_d(p)$ with center $p$. These cubes all have
disjoint interiors and each of them has diameter $\sqrt{d}$. Moreover, their union $\cup_{p\in B_d(r) \cap \Z^d}
H_d(p)$ is included in $B_d(r+\sqrt{d}/2)$, it contains $B_d(r-\sqrt{d}/2)$ and its volume is equal to $N(B_d(r))$,
which readily implies the statement of the lemma.
\end{proof}

We will also use a bound on the number of points on a sphere.

\begin{lemma} \label{l:2}
There exists a constant $c_0>0$ such that
$$
N(S_d(r))\leq 2^{\frac{c_0\log{r}}{\log\log{r}}}N(B_{d-2}(r)).
$$
\end{lemma}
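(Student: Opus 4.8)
The plan is to count the lattice points on $S_d(r)$ by projecting them onto a coordinate $(d-2)$-plane and bounding the size of each fibre. Write a lattice point on the sphere as $(x_1,\dots,x_d)\in\Z^d$ with $\sum_{i=1}^d x_i^2=r^2$, and map it to $(x_1,\dots,x_{d-2})$. Since $\sum_{i=1}^{d-2}x_i^2\le r^2$, every image lies in $B_{d-2}(r)\cap\Z^{d-2}$, so the number of distinct images is at most $N(B_{d-2}(r))$. The fibre over a fixed $(x_1,\dots,x_{d-2})$ consists of the integer pairs $(x_{d-1},x_d)$ with $x_{d-1}^2+x_d^2=m$, where $m:=r^2-\sum_{i=1}^{d-2}x_i^2$ is an integer with $0\le m\le r^2$; hence it has size $\rho(m)$, where $\rho(m)$ denotes the number of representations of $m$ as an ordered sum of two integer squares. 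Consequently
$$
N(S_d(r))=\sum_{(x_1,\dots,x_{d-2})\in B_{d-2}(r)\cap\Z^{d-2}}\rho\!\left(r^2-\textstyle\sum_{i=1}^{d-2}x_i^2\right)\le\Big(\max_{0\le m\le r^2}\rho(m)\Big)\,N(B_{d-2}(r)).
$$

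It then remains to show $\max_{0\le m\le r^2}\rho(m)\le 2^{c_0\log r/\log\log r}$ for a suitable $c_0>0$. For this I would use the classical identity $\rho(m)=4\sum_{\ell\mid m}\chi(\ell)$, with $\chi$ the non-principal character modulo $4$, to get the crude but sufficient bound $\rho(m)\le 4\,\tau(m)$, where $\tau$ is the number-of-divisors function. Wigert's theorem on the maximal order of the divisor function gives $\tau(m)=2^{(1+o(1))\log m/\log\log m}$; since $x\mapsto\log x/\log\log x$ is increasing for large $x$ and $m\le r^2$ forces $\log m/\log\log m\le 2\log r/\log\log r$, the uniform bound follows once the factor $4$ and the finitely many small-$m$ cases (where $\rho(m)$ is bounded) are absorbed into $c_0$. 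Plugging this into the displayed inequality proves the lemma.

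The projection and fibre-counting are routine; the real content lies in the uniform estimate for $\rho(m)$, and the main obstacle is quoting the sum-of-two-squares (equivalently divisor) bound in precisely the form $2^{c_0\log r/\log\log r}$. The one point requiring care is that the exponent is maximised at the top of the range $m\le r^2$, and that passing from $\log m$ to $\log r$ costs only a constant factor — this is exactly what produces the stated shape of the bound, with a $\log\log r$ in the denominator.
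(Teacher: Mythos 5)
Your proof is correct and is essentially the same as the paper's: the paper also splits off the last two coordinates, writing $N(S_d(r))=\sum_{s}N(S_{d-2}(\sqrt{s}))N(S_2(\sqrt{r^2-s}))$, and then bounds the number of representations as a sum of two squares by $4d(\cdot)\le 2^{c'\log(\cdot)/\log\log(\cdot)}$, exactly your $\rho(m)\le 4\tau(m)$ step with Wigert's bound. If anything, your handling of the uniform bound $\max_{m\le r^2}\rho(m)$ is slightly more careful than the paper's, which compresses this into the expression $4d(r^2)$.
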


\begin{proof} The number $N(S_d(r))$ is the number of ways $r^2$ can be written as an ordered sum of $d$ perfect squares. Such
sum can be broken into two sums, the first containing all summands except the last two, and the second containing the
last two summands, so we have
$$
N(S_d(r))=\sum_{s=1}^{r^2}N(S_{d-2}(\sqrt{s}))N(S_2(\sqrt{r^2-s})).
$$

The number of ways a positive integer $n$ can be represented as a sum of two squares is known to be at most 4 times
$d(n)$, the number of divisors of $n$, and there exists a constant $c'>0$ so that $d(n)\leq
2^{\frac{c'\log{n}}{\log\log{n}}}$ (see, e.g.,~\cite[Section 13.10]{Apostol}). Hence, we have
\begin{align*}
N(S_d(r)) &\leq 4 d(r^2)\sum_{s=1}^{r^2}N(S_{d-2}(\sqrt{s})) \\
& \leq 2^{\frac{2c'\log{r}}{\log\log{r}}}N(B_{d-2}(r)).
\end{align*}
\end{proof}

\begin{proof} \textbf{(of Theorem~\ref{t:even} for even $k$)}

We will give a construction of a point set $P$ containing no $k+1$ collinear points, with a high value of $t_k(P)$.

For a positive integer $d$ let us set $r_0=2^{d}$. For each integer point from $B_d(r_0)$, the square of its distance
to the origin is at most $r_0^2$. As the square of that distance is an integer, we can apply pigeonhole principle to
conclude that there exists $r$, with $0< r \leq r_0$, such that the sphere $S_d(r)$ contains at least $1/r_0^2$
fraction of points from $B_d(r_0)$, and together with Lemma~\ref{l:1}, we have
\begin{equation*} \label{n-pts}
N(S_d(r))\geq \frac{1}{r_0^2} N(B_d(r_0)) \geq \frac{1}{r_0^2} V(B_d(r_0-\sqrt{d}/2)).
\end{equation*}

Let us consider the unordered pairs of different points from $\Z^d \cap S_d(r)$. The total number
of such pairs is at least
$$
\binom{N(S_d(r))}{2} \geq \binom{\frac{V(B_d(r_0-\sqrt{d}/2))}{r_0^2}}{2}.
$$
For every $p,q\in \Z^d \cap S_d(r)$ the Euclidean distance $d(p,q)$ between $p$ and $q$ is at most $2r$, and the square
of that distance is an integer. Hence, there are at most $4r^2$ different possible values for $d(p,q)$. Applying
pigeonhole principle again, we get that there are at least
\begin{equation} \label{e:pairs}
\frac{1}{4r^2} \binom{\frac{V(B_d(r_0-\sqrt{d}/2))}{r_0^2}}{2}\geq \frac{V(B_d(r_0-\sqrt{d}/2))^2}{8r_0^6}
\end{equation}
pairs of points from $\Z^d \cap S_d(r)$ that all have the same distance. We denote that
distance by $\ell$.

Let $p_1,q_1\in \Z^d \cap S_d(r)$ with $d(p_1,q_1)=\ell$, and let $s$ be the line going through $p_1$ and $q_1$. We
define $k-2$ points $p_2,\dots,p_{k/2}$, $q_2, \dots, q_{k/2}$ on the line $s$ such that $d(p_i,p_{i+1})=\ell$ and
$d(q_i,q_{i+1})=\ell$, for all $1\leq i<k/2$, and all $k$ points $p_1,\dots,p_{k/2}$, $q_1, \dots, q_{k/2}$ are
different, see Figure~\ref{f:1}.

\begin{figure}[htb]
  \centering %
  \epsfig{file=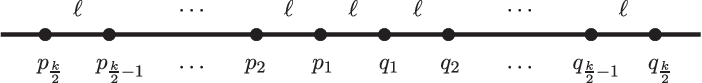, scale=1}
  \caption{Line $s$ with $k$ points, for $k$ even.\label{f:1}}
  \bigskip
\end{figure}

Knowing that $p_1$ and $q_1$ are points from $\Z^d$, the way we defined points \linebreak $p_2,\dots,p_{k/2}, q_2,
\dots, q_{k/2}$ implies that they have to be in $\Z^d$ as well. If we set
\begin{equation} \label{e:ri-even}
r_i:=\sqrt{r^2 + i(i-1)\ell^2}\leq\sqrt{r^2+ (2ir)^2}\leq r(k+1),
\end{equation}
for all $i=1,\dots,k/2$, then the points
$p_i$ and $q_i$ belong to the sphere $S_d(r_i)$, and hence, $p_i, q_i\in \Z^d \cap S_d(r_i)$, for all $i=1,\dots,k/2$,
see Figure~\ref{f:2}.

\begin{figure}[htb]
  \centering %
  \epsfig{file=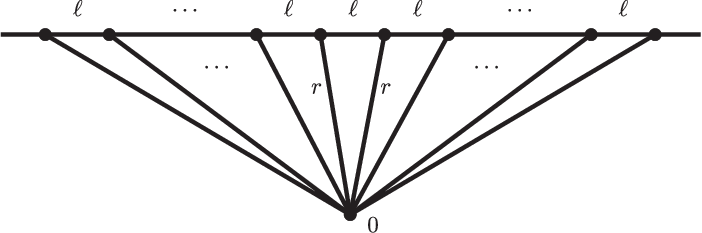, scale=1}
  \caption{The position of the $k$ points related to the origin, for $k$ even.\label{f:2}}
  \bigskip
\end{figure}

We define the point set $P$ to be the set of all integer points on spheres $S_d(r_i)$, for all
$i=1,\dots,k/2$, i.e.,
$$
P:=\Z^d \cap \left( \cup_{i=1}^{k/2} S_d(r_i) \right).
$$
As the point set $P$ is contained in the union of $k/2$ spheres, there are obviously no $k+1$ collinear points in $P$.
On the other hand, every pair of points $p_1,q_1\in \Z^d \cap S_d(r)$ with $d(p_1,q_1)=\ell$ defines one line that
contains $k$ points from $P$.

If we set $n:=|P|$, and if $d$ is large enough, we have
\begin{align}
n & = \sum_{i=1}^{k/2} N(S_d(r_i)) \notag \\
&\leq \sum_{i=1}^{k/2}2^{\frac{c_1\log{r_i}}{\log\log{r_i}}}N(B_{d-2}(r_i))\notag  \\
& \leq k 2^{\frac{c_2 d}{\log{d}}} N\left(B_{d-2}((k+1)2^d)\right) \label{e:third}\\
& \leq k 2^{\frac{c_2 d}{\log{d}}} V\left(B_{d-2}\left(2^d\left(k+1+\sqrt{d} / 2^{d+1}\right)\right)\right)\notag  \\
& \leq k 2^{\frac{c_2 d}{\log{d}}} \frac{2^{d(d-2)} \left(k+1+\sqrt{d} / 2^{d+1} \right)^{d-2} \pi^{(d-2)/2} }{ \frac{\sqrt{\pi d} \left(d/2e\right)^{d/2}}{d/2} } \notag \\
& \leq \frac{2^{d(d-2)} \left(k+2\right)^{d} \pi^{d/2} (2e)^{d/2}}{ d^{d/2} } \notag \\
& \leq 2^{d^2- \frac{d}{2}\log d + \frac{d}{2} \log\left( \frac{(k+2)^2\pi e}{8}\right) } \notag ,
\end{align}
were $c_1,c_2$ are constants depending only on $k$. Here, we used Lemma~\ref{l:1}, Lemma~\ref{l:2}, (\ref{vball}),
(\ref{e:ri-even}), and the standard estimates for the function $\Gamma$.

On the other hand, we get from (\ref{e:pairs}) and (\ref{vball}) that the number of lines containing exactly $k$ points
from $P$ is
\begin{align*}
t_k(P) & \geq \left( \frac{\left( 2^d -\frac{\sqrt{d}}{2}\right)^d \pi^{d/2} (2e)^{d/2}}{\sqrt{\frac{4\pi}{d}} \, d^{d/2}} \right)^2 \frac{1}{2^{6d+3}} \\
& \geq 2^{2d^2- d\log d - d \log \left( \frac{64}{\pi e}\right)} .
\end{align*}


Putting the previous two inequalities together it follows that there
exist a constant $n_0$ depending on $k$, such that for $n>n_0$ we
have
$$
t_k(P) \geq  \frac{n^2}{2^{c\sqrt{\log{n}}}} = n^{2-\frac{c}{\sqrt{\log{n}}}},
$$
where $c=2\log(3k+6)$.

To obtain a point set in two dimensions, we project the $d$ dimensional point set to a two dimensional plane in $\R^d$.
The vector $v$ along which we project should be chosen generically, so that every two points from our point set are
mapped to different points, and every three points that are not collinear are mapped to points that are still not
collinear.
\end{proof}

\subsection{Proof for odd $k$}
\begin{proof} \textbf{(of Theorem~\ref{t:even} for odd $k$)}

We will give a construction of a point set $P$ containing no $k+1$ collinear points, with a high value of $t_k(P)$.

For a positive integer $d$ let us set $r_0=2^{d}$. In the same way as in the proof for even $k$, we can find $r$ with
$0< r \leq r_0$, such that the sphere $S_d(r)$ contains at least a $1/r_0^2$ fraction of the integer points from
$B_d(r_0)$, and hence
$$
N(S_d(r))\geq \frac{1}{r_0^2} N(B_d(r_0)) \geq \frac{1}{r_0^2} V(B_d(r_0-\sqrt{d}/2)).
$$

Now, for every point $p\in \Z^d \cap S_d(r)$ there is a corresponding point $p'$ on the sphere $S_d(2r)$ that belongs
to the half-line from the origin to $p$. It is not hard to see that all coordinates of $p'$ are even integers, so
$p'\in (2\Z)^d \cap S_d(2r)$. Hence, the number of points in $(2\Z)^d \cap S_d(2r)$ is at least $N(S_d(r))$.

We look at unordered pairs of different points from $(2\Z)^d \cap S_d(2r)$. The total number
of such pairs is at least
$$
\binom{N(S_d(r))}{2} \geq \binom{\frac{V(B_d(r_0-\sqrt{d}/2))}{r_0^2}}{2}.
$$
If we just look at such pairs of points that have different first coordinate, we surely have at least half as many
pairs as before. To see that, observe that for every point $p\in(2\Z)^d \cap S_d(2r)$, a point obtained from $p$ by
changing the sign of any number of coordinates of $p$ and/or permuting the coordinates is still in $(2\Z)^d \cap
S_d(2r)$.

For every $p,q\in (2\Z)^d \cap S_d(2r)$ we know that the Euclidean distance $d(p,q)$ between $p$ and $q$ is at most
$4r$, and that the square of that distance is an integer. Hence, there are at most $16r^2$ different possible values
for $d(p,q)$. Applying pigeonhole principle again, we get that there are at least
\begin{equation} \label{e:oddpairs}
\frac{1}{16r^2} \cdot \frac{1}{2}\binom{\frac{V(B_d(r_0-\sqrt{d}/2))}{r_0^2}}{2}\geq \frac{V(B_d(r_0-\sqrt{d}/2))^2}{64
r_0^6}
\end{equation}

pairs of points from $(2\Z)^d \cap S_d(2r)$ with different first coordinate that have the same distance. We denote that
distance by $2\ell$. Note that since both $p$ and $q$ are contained in $(2\Z)^d$, we have that the middle point $m$ of
the segment $pq$ belongs to $\Z^d$, and $d(p,m)=d(q,m)=\ell$.

Let $p_1,q_1\in (2\Z)^d \cap S_d(2r)$ with $d(p_1,q_1)=2\ell$, let $m_0$ be the middle point
of the segment $p_1q_1$, and let $s$ be the line going through $p_1$ and $q_1$. We define
$k-3$ points $p_2,\dots,p_{(k-1)/2}$, $q_2, \dots, q_{(k-1)/2}$ on the line $s$ such that
$d(p_i,p_{i+1})=\ell$ and $d(q_i,q_{i+1})=\ell$, for all $1\leq i<(k-1)/2$, and all $k$ points
$m_0,p_1,\dots,p_{(k-1)/2}$, $q_1, \dots, q_{(k-1)/2}$ are different, see Figure~\ref{f:3}.

\begin{figure}[htb]
  \centering %
  \epsfig{file=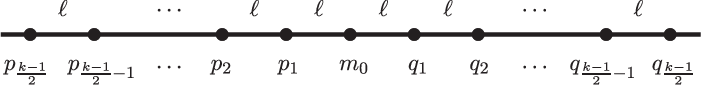, scale=1}
  \caption{Line $s$ with $k$ points, for $k$ odd.\label{f:3}}
  \bigskip
\end{figure}

Knowing that $p_1$ and $q_1$ are points from $(2\Z)^d$, the way we defined points \linebreak
$m_0, p_2,\dots,p_{(k-1)/2}$, $q_2, \dots, q_{(k-1)/2}$ implies that they have to be in
$\Z^d$. If we set
\begin{equation} \label{e:ri-odd}
r_i:=\sqrt{4r^2 + (i+1)(i-1)\ell^2} \leq r(k+1),
\end{equation}
for all $i=0,\dots,(k-1)/2$, the points $p_i$ and $q_i$
belong to the sphere $S_d(r_i)$, and the point $m_0$ belongs to $S_d(r_0)$. Hence, $p_i, q_i\in \Z^d \cap S_d(r_i)$,
for all $i=1,\dots,(k-1)/2$, and $m_0\in\Z^d \cap S_d(r_0)$, see Figure~\ref{f:4}.

\begin{figure}[htb]
  \centering %
  \epsfig{file=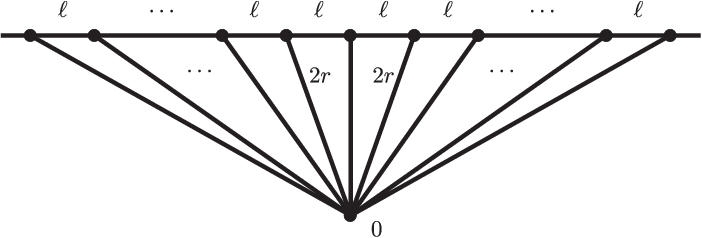, scale=1}
  \caption{The position of the $k$ points related to the origin, for $k$ odd.\label{f:4}}
  \bigskip
\end{figure}

By $\alpha_x$ we denote the hyperplane containing all points in
$\R^d$ with first coordinate equal to $x$. Let $M$ be the multiset
of points $m$ such that there exist points $p,q\in (2\Z)^d \cap
S_d(2r)$ having different first coordinate, with $d(p,q)=2\ell$, and
with $m$ being the middle point of the segment $pq$. In this
multiset, we include the point $m$ once for every such $p$ and $q$.
We know that $M\subseteq \Z^d \cap S_d(r_0)$, and each point from
$\Z^d \cap S_d(r_0)$ is contained in $\alpha_x$ for some integer
$-r_0\leq x \leq r_0$. Hence, by the pigeonhole principle, we get
from (\ref{e:oddpairs}) that there exists $-r_0\leq x_0 \leq r_0$
such that $\alpha_{x_0} \cap M$ contains at least
\begin{equation} \label{e:pairs-odd}
\frac{|M|}{2r_0}\geq \frac{V(B_d(r_0-\sqrt{d}/2))^2}{128 r_0^7}
\end{equation}
points.

We define the point set $P$ to be the set of all integer points on spheres $S_d(r_i)$, for all
$i=1,\dots,(k-3)/2$, all integer points on $S_d(r_{(k-1)/2})$ that do not belong to
$\alpha_{x_0}$, and all integer points on $S_d(r_0)$ that belong to $\alpha_{x_0}$. I.e., we
have
$$
P:=\Z^d \cap \left(\left( \cup_{i=1}^{(k-3)/2} S_d(r_i) \right) \cup \left( S_d(r_{(k-1)/2})
\setminus \alpha_{x_0} \right) \cup \left( S_d(r_0)\cap \alpha_{x_0} \right) \right).
$$

Let us first prove that the point set $P$ does not contain $k+1$ collinear points. As $P$ is contained in the union of
$(k-1)/2$ spheres and a hyperplane, any line that is not contained in that hyperplane cannot contain more than $k$
points from $P$. But the point set $P$ restricted to the hyperplane $\alpha_{x_0}$ belongs to the union of $(k-1)/2$
spheres $S_d(r_i)$, for $i=0,\dots,(k-3)/2$, so we can also conclude that there are no $k+1$ collinear points in $P\cap
\alpha_{x_0}$.

Let $n:=|P|$. Obviously, $P\subseteq \cup_{i=0}^{(k-1)/2} S_d(r_i)$, so we can estimate the value of $n$ similarly as in the even case. We have
\begin{align*}
n & \leq \sum_{i=0}^{\frac{k-1}{2}} N(S_d(r_i)) \\
&\leq \sum_{i=0}^{\frac{k-1}{2}} 2^{\frac{c_1\log{r_i}}{\log\log{r_i}}}N(B_{d-2}(r_i)) \\
& \leq k2^{\frac{c_2 d}{\log{d}}}N\left(B_{d-2}((k+1)2^d)\right) \\
 & \leq 2^{d^2- \frac{d}{2}\log d + \frac{d}{2} \log\left( \frac{(k+2)^2\pi e}{8}\right) },
\end{align*}
were $c_1,c_2$ are constants depending only on $k$. The last estimate was done the same way as in the case where $k$ is
even, as the third line of the calculation is exactly the same as the one obtained in~(\ref{e:third}). 

On the other hand, every pair of points $p_1,q_1\in \Z^d \cap S_d(r)$ with different first coordinate, with
$d(p_1,q_1)=2\ell$, and with the middle point that belongs to $\alpha_{x_0} \cap M$, defines one line that contains $k$
points from $P$. Note that such line cannot belong to $\alpha_{x_0}$, as the first coordinates of $p_1$ and $q_1$
cannot be $x_0$ simultaneously.

Hence, we get from (\ref{e:pairs-odd}) and (\ref{vball}) that the
number of lines containing exactly $k$ points from $P$ is
\begin{align*}
t_k(P) & \geq \left( \frac{\left( 2^d -\frac{\sqrt{d}}{2}\right)^d \pi^{d/2} (2e)^{d/2}}{\sqrt{\frac{4\pi}{d}} \, d^{d/2}} \right)^2 \frac{1}{2^{7d+7}} \\
& \geq 2^{2d^2- d\log d - d \log \left( \frac{128}{\pi e}\right)} .
\end{align*}

Putting the last two inequalities together we get that there exists
a constant $n_0$ depending on $k$, such that for $n>n_0$ we have
$$
t_k(P) \geq  \frac{n^2}{2^{c\sqrt{\log{n}}}}= n^{2-\frac{c}{\sqrt{\log{n}}}},
$$
where $c=2\log(4k+9)$.

To obtain a point set in two dimensions, we project the $d$ dimensional point set to a two dimensional plane in $\R^d$, similarly as in the even case.
\end{proof}

\section{Acknowledgements}
The results presented in this paper are obtained during the authors' participation in 8th
Gremo Workshop on Open Problems -- GWOP 2010. We thank the organizers for inviting us to the
workshop and providing us with a gratifying working environment. Also, we are grateful for the
inspiring conversations with the members of the group of Emo Welzl.

\bibliographystyle{plain}
\bibliography{collinear-paper-01}
\end{document}